\documentclass{amsart}
\usepackage{amsmath,amssymb,enumerate,etoolbox,mathrsfs}
\bibliographystyle{amsplain}

\newtheorem{theorem}{Theorem}
\newtheorem*{theorem*}{Theorem}

\newtheorem*{conjecture*}{Conjecture}

\newtheorem*{proposition*}{Proposition}
\newtheorem{lemma}{Lemma}
\newtheorem*{lemma*}{Lemma}
\newtheorem{lemmaa}{Lemma}

\theoremstyle{definition}

\newtheorem*{remark*}{Remark}

\newcommand{\F}{\mathbb{F}}

\newcommand{\Q}{\mathbb{Q}}
\newcommand{\Z}{\mathbb{Z}}
\newcommand{\SL}{{\rm SL}}
\newcommand{\GL}{{\rm GL}}
\newcommand{\Gal}{{\rm Gal}}
\newcommand{\Fact}{{\rm Fact}\,}
\newcommand{\ord}{{\rm ord}}
\newcommand{\supp}{{\rm supp}\,}
\newcommand{\Irr}{{\rm Irr}}

\newcommand{\Cl}{{\rm Cl}}
\newcommand{\cF}{\mathcal{F}}
\newcommand{\cG}{\mathcal{G}}
\newcommand{\cP}{\mathcal{P}}
\newcommand{\cR}{\mathcal{R}}
\newcommand{\cX}{\mathcal{X}}
\newcommand{\Prob}{\mathbf{P}}

\title[Many zeros of many characters of ${\rm GL}(n,q)$]{Many zeros of many characters of 
$\boldsymbol{\GL(n,q)}$}
\author[P.~X.~Gallagher]{Patrick X.~Gallagher}
\address{Department of Mathematics, Columbia University, New York, NY, USA}
\email{pxg@math.columbia.edu}
\author[M.~J.~Larsen]{Michael J.~Larsen}
\address{Department of Mathematics, Indiana University, Bloomington, IN, USA}
\email{mjlarsen@indiana.edu}
\author[A.~R.~Miller]{Alexander~R.~Miller}
\address{Fakult\"at f\"ur Mathematik, Universit\"at Wien, Vienna, Austria}
\email{alexander.r.miller@univie.ac.at}
\begin{document}
\begin{abstract}
For $G=\GL(n,q)$, the proportion $P_{n,q}$ of pairs $(\chi,g)$ in $\Irr(G)\times G$ 
with $\chi(g)\neq 0$ satisfies $P_{n,q}\to 0$ as $n\to\infty$. 
\end{abstract}
\thanks{ML was partially supported by the NSF grant DMS-1702152.}
\maketitle
\thispagestyle{empty}
\section{Introduction}
A few years ago, it was shown \cite{ARMiller1} that for 
$G=S_n$ the proportion $P_n$
of pairs $(\chi,g)$ in $\Irr(G)\times G$ 
with $\chi(g)\neq 0$ satisfies 
\begin{equation}\label{Sn:Limit}
P_n\to 0\ \text{as}\ n\to\infty.
\end{equation}
Here we prove the analogous statement for $\GL(n,q)$:
\begin{theorem}\label{Theorem:GL(n,q):Bound}
The proportion $P_{n,q}$, in $\Irr({\rm GL}(n,q))\times {\rm GL}(n,q)$, 
of pairs $(\chi,g)$ with $\chi(g)\neq 0$ satisfies 
\begin{equation}\label{Thm:1:Limit}
\sup_q P_{n,q}\to 0\ \text{as}\ n\to\infty.
\end{equation}
\end{theorem}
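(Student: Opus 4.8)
The plan is to parallel the strategy used for $S_n$, exploiting the parametrization of $\Irr(\GL(n,q))$ and the conjugacy classes of $\GL(n,q)$ by combinatorial data (tuples of partitions indexed by irreducible polynomials), together with the fact that a character vanishes whenever the support of its combinatorial label is "large" in a suitable sense. Concretely, I would first recall the Green/Deligne–Lusztig description: irreducible characters of $G=\GL(n,q)$ correspond to functions $\Lambda$ assigning a partition to each $\Gal$-orbit of characters of $\overline{\F}_q^\times$, with $\sum |\Lambda(\cdot)|\cdot\deg = n$, while conjugacy classes correspond to functions $\mu$ assigning a partition to each monic irreducible polynomial over $\F_q$ (other than $x$), again with a degree-weighted size condition. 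The key vanishing input I would isolate as a lemma: if the labels $\Lambda$ and $\mu$ are "incompatible" — e.g. the element $g$ has a primary decomposition whose set of elementary divisor degrees is not dominated, in the relevant dominance sense, by the structure of $\chi$ — then $\chi(g)=0$. This is the $\GL$-analogue of the statement that $\chi^\lambda(g)=0$ when $g$ has a cycle longer than the largest hook you can fit, and it follows from the Murnaghan–Nakayama-type recursion for $\GL(n,q)$ (Fong–Srinivasan, or the parabolic induction formula for Deligne–Lusztig characters restricted to semisimple/unipotent parts).

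**Next**, I would set up the counting. Write $k(n,q)=|\Irr(\GL(n,q))|=|\{\text{classes}\}|$; the generating function $\sum_n k(n,q)t^n = \prod_{i\ge1}\frac{1}{1-q t^i}$ is classical, so $k(n,q)$ is subexponential in a controlled way and, crucially, the number of $\chi$ (resp. $g$) whose label is supported on few polynomials / orbits, or has all parts small, is a vanishing fraction of the total — exactly as in the symmetric group case where most partitions have both large parts and many parts. I would prove two "typicality" statements, uniformly in $q$: (i) for all but $o(1)$ of $g\in G$, the characteristic polynomial of $g$ has an irreducible factor of degree $\ge \omega(n)$ for any $\omega(n)\to\infty$ slowly (this is the $\F_q[x]$ analogue of a random permutation having a long cycle, and uniformity in $q$ comes from the cycle-index for $\GL$, due to Fulman); and (ii) for all but $o(1)$ of $\chi$, the label $\Lambda$ is "small" in the complementary sense that forces vanishing against such $g$. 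Combining (i), (ii), and the vanishing lemma gives $P_{n,q}\le o(1)+o(1)$ uniformly in $q$.

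**The main obstacle** I anticipate is getting genuinely $q$-uniform estimates and, relatedly, pinning down the precise vanishing criterion in a form that is simultaneously (a) easy to verify for "most" pairs and (b) actually correct for $\GL(n,q)$ rather than just morally analogous to $S_n$. For $S_n$ one uses that $\chi^\lambda$ vanishes on any class with a part exceeding the largest hook length complementary to the $2$-core structure, but for $\GL(n,q)$ the cleanest true statement is probably in terms of: if $g$ is regular semisimple with an eigenvalue generating a large-degree extension (equivalently the characteristic polynomial has a large irreducible factor $f$), then $\chi(g)=0$ unless the Deligne–Lusztig datum of $\chi$ involves a character of a torus containing a factor of order divisible by the corresponding cyclotomic-type factor — and one must check this forces $\Lambda$ to be supported on a $\Gal$-orbit of size $\ge\deg f$, which is rare. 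The other delicate point is the small-$q$ regime, especially $q=2$: here $|\overline{\F}_q^\times|$-character orbits and the counting are tightest, so I would handle small $q$ by a direct argument using that even for $q=2$ the relevant generating-function coefficients still force long irreducible factors with probability $1-o(1)$, invoking Fulman's uniform cycle-index asymptotics. Everything else — the generating-function bookkeeping for $k(n,q)$, the Chebyshev/second-moment argument for typicality of $\Lambda$, and assembling the pieces — I expect to be routine once the vanishing lemma and the $q$-uniform long-factor statement are in hand.
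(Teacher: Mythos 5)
Your plan hinges entirely on a ``vanishing lemma'' for $\GL(n,q)$ --- that a combinatorial incompatibility between the label of $\chi$ and the label of $g$ forces $\chi(g)=0$ --- and you never state it precisely, let alone prove it; you yourself flag it as the main obstacle. That is not a technical loose end but the missing heart of the argument. For $S_n$ the Murnaghan--Nakayama rule gives a clean, checkable sufficient condition for vanishing (no hook of the right length), but for $\GL(n,q)$ no comparably clean and comparably \emph{generic} criterion is available: Green's formulas and the Deligne--Lusztig description do not yield a statement of the form ``if $p_g$ has a large irreducible factor and the label of $\chi$ is typical, then $\chi(g)=0$'' that one could verify for a $1-o(1)$ proportion of pairs. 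Your typicality claim (ii) is correspondingly undefined --- you cannot say which characters are ``small in the complementary sense'' until the vanishing criterion is pinned down --- and the second-moment bookkeeping you call routine cannot even begin without it. A further sign of trouble: the natural fallback for $S_n$ (column orthogonality plus the fact that typical centralizers are tiny compared to $|\Irr(S_n)|=p(n)$) fails outright for $\GL(n,q)$, where a typical (regular semisimple) element has centralizer of order about $q^n$, the same size as $|\Irr(\GL(n,q))|$. So as written your proposal is a program whose feasibility is exactly what is in doubt.

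The paper takes a genuinely different route that never evaluates, or shows the vanishing of, any individual character value. Its Lemma~A is a Burnside/Galois-averaging argument valid for every finite group: the proportion of pairs with $\chi(g)\neq 0$ \emph{and} $(d_\chi,s_g)/d_\chi\leq\varepsilon$ is at most $\varepsilon^2$. The $\GL(n,q)$-specific work (Lemma~B) is then purely arithmetic: for all but a $\delta$-proportion of pairs, the characteristic polynomial of $g$ has a simple irreducible factor of degree $m_g\ge\sqrt n$, and a primitive prime divisor $\ell_g>m_g$ of $q^{m_g}-1$ satisfies $\ord_{\ell_g}d_\chi=\ord_{\ell_g}|G|$ while $\ord_{\ell_g}s_g<\ord_{\ell_g}|G|$; hence $(d_\chi,s_g)/d_\chi\le 1/\ell_g<1/\sqrt n$. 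The parametrizations by partition-valued functions that you invoke are used there only for \emph{counting} (how many $\nu$ have small ``deficiency'', how many have support in degrees divisible by $m$), never for computing $\chi(g)$. If you want to rescue your outline, you would either have to locate and prove the missing vanishing criterion --- which I expect to be hard or false in the needed generality --- or switch to a divisibility argument of this kind.
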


One of the two proofs of \eqref{Sn:Limit} in \cite{ARMiller1} 
is based on the special property of $S_n$, derived from 
estimates due to Erd\H{o}s and Lehner \cite{ErdosLehner} 
and Goncharoff \cite{Goncharoff}, that for large $n$, 
a suitably chosen small proportion of $\Cl(S_n)$ covers all 
but a small proportion of $S_n$. For the proof of 
\eqref{Thm:1:Limit} for $\GL(n,q)$, we use both conjugacy class 
sizes and character degrees. There is a general inequality, \eqref{Lemma:General:Bound} below, 
proved in Section~\ref{Sect:Proof:Lemma:A}, 
and special properties \eqref{Lemma:B:first}, \eqref{Lemma:B:second} 
of the degrees and sizes 
of almost all characters and classes of $\GL(n,q)$, which are 
proved in Section~\ref{Sect:Proof:Lemma:B}.

To lighten the notation, for a finite group $G$ we denote by 
$d_\chi$ the degree $\chi(1)$ of an (irreducible) character 
$\chi$ of $G$, by $s_g$ the size $|g^G|$ of the conjugacy class 
$g^G$, and by $(d_\chi,s_g)$ the greatest common divisor of $d_\chi$ and $s_g$.

\begin{lemmaa}\label{Lemma:A} 
For each finite group $G$ and $\varepsilon >0$, 
the proportion $P$, in $\Irr(G)\times G$, of pairs
$(\chi,g)$ with $\chi(g)\neq 0$ satisfies 
\begin{equation}\label{Lemma:General:Bound}
P\leq Q(\varepsilon)+\varepsilon^2,
\end{equation}
with $Q(\varepsilon)$ the proportion, in $\Irr(G)\times G$, of 
pairs $(\chi,g)$ with $(d_\chi,s_g)/d_\chi\geq \varepsilon$.
\end{lemmaa}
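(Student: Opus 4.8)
The plan is to split the pairs $(\chi,g)$ with $\chi(g)\neq0$ according to the size of $(d_\chi,s_g)/d_\chi$. Those with $(d_\chi,s_g)/d_\chi\ge\varepsilon$ form a subset of the pairs counted by $Q(\varepsilon)$, so it remains to bound the proportion of pairs $(\chi,g)$ with $\chi(g)\neq0$ and $(d_\chi,s_g)/d_\chi<\varepsilon$. Writing $d'_{\chi,g}:=d_\chi/(d_\chi,s_g)$, this last condition says $d'_{\chi,g}>1/\varepsilon$, and I claim the whole statement reduces to the single inequality
\[
\sum_{(\chi,g)\,:\,\chi(g)\neq0}\bigl(d'_{\chi,g}\bigr)^2\ \le\ |G|\cdot|\Irr(G)|.
\]
Indeed, granting this, each pair with $\chi(g)\neq0$ and $d'_{\chi,g}>1/\varepsilon$ contributes more than $1/\varepsilon^2$ to the left side, so there are at most $\varepsilon^2|G|\,|\Irr(G)|$ of them, i.e.\ their proportion is at most $\varepsilon^2$; adding $Q(\varepsilon)$ gives \eqref{Lemma:General:Bound}.

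To prove the displayed inequality, I would start from the standard fact that $s_g\chi(g)/d_\chi$ is an algebraic integer; since $s_g/(d_\chi,s_g)$ and $d'_{\chi,g}$ are coprime, a one-line B\'ezout argument upgrades this to: $\chi(g)/d'_{\chi,g}$ is an algebraic integer. If $\chi$ were rational-valued this would already force $|\chi(g)|\ge d'_{\chi,g}$ whenever $\chi(g)\neq0$, and summing the orthogonality relation $\sum_{g}|\chi(g)|^2=|G|$ over $\chi\in\Irr(G)$ would finish; the point of the proof is to get past the irrationality of character values of $\GL(n,q)$. For this I would partition $\Irr(G)$ into orbits $O=\{\chi_1,\dots,\chi_t\}$ of $\Gal(\overline{\Q}/\Q)$. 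Galois-conjugate characters have the same degree, so for a fixed $g$ the integers $d'_{\chi_i,g}$ all coincide with one value $d'$, and $\chi_1(g)=0$ if and only if $\chi_i(g)=0$ for all $i$. When the $\chi_i(g)$ are nonzero, $\prod_i\chi_i(g)$ is fixed by $\Gal(\overline{\Q}/\Q)$ and is a product of algebraic integers, hence a nonzero rational integer; dividing by $(d')^t$ (legitimate since each $\chi_i(g)/d'$ is an algebraic integer) still leaves a nonzero rational integer, so $\prod_i|\chi_i(g)|\ge(d')^t$. By the AM--GM inequality, $\sum_i|\chi_i(g)|^2\ge t(d')^2=\sum_i(d'_{\chi_i,g})^2$. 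This orbit-wise bound holds trivially (both sides zero) when the $\chi_i(g)$ vanish, so summing over all Galois orbits and over all $g\in G$, and invoking $\sum_{\chi,g}|\chi(g)|^2=|G|\,|\Irr(G)|$, yields the displayed inequality.

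The step I expect to be the main obstacle is exactly this last one: replacing the naive divisibility bound ``$d'_{\chi,g}\mid\chi(g)$ in $\Z$'' — which is what the $S_n$ argument uses and which fails here — by something valid for arbitrary finite groups. The Galois-averaging device resolves it, the crux being that a $\Gal(\overline{\Q}/\Q)$-orbit product of character values at $g$ is a rational integer that is nonzero as soon as any one factor is, so it remains $\ge1$ in absolute value after dividing out the common factor $(d')^t$. Once this is in hand, the reduction to the single inequality and the final counting are routine, and no special properties of $\GL(n,q)$ are needed for Lemma~\ref{Lemma:A}.
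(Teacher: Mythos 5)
Your argument is correct and is essentially the paper's own proof: your B\'ezout step is exactly the paper's \eqref{chi:alpha:eq}, and your Galois-orbit product combined with AM--GM is precisely Burnside's device (the average of $|\sigma(\alpha)|^2$ over the Galois group is $\ge 1$ for a nonzero algebraic integer $\alpha$), which the paper applies to the orthogonality relation \eqref{G:Sum} to obtain \eqref{G:Sum:Ineq:1} and \eqref{G:Sum:Ineq:2}, followed by the same Markov-type count. The only difference is bookkeeping: the paper fixes $\chi$ and averages over $\Gal(\Q(\zeta_{|G|})/\Q)$, while you fix $g$ and argue orbit by orbit, proving Burnside's fact inline rather than citing it.
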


\begin{lemmaa}\label{Lemma:B}
For all $\delta,\varepsilon>0$, there exists $N$ such that if $n\ge N$, $q$ is a prime power, and $G = \GL(n,q)$, then
for $(\chi,g)$ in $\Irr(G)\times G$,
\begin{equation}\label{Lemma:B:first}
\frac{(d_\chi,s_g)}{d_\chi}<\varepsilon,
\end{equation}
except for $(\chi,g)$ in a subset 
$\cR\subset \Irr(G)\times G$ 
such that 
\begin{equation}\label{Lemma:B:second}
|\cR| \leq \delta |\Irr(G)\times G|.
\end{equation}
\end{lemmaa}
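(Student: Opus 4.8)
The idea is to combine the combinatorial parametrizations of $\Irr(G)$ and $\Cl(G)$ for $G=\GL(n,q)$ with a Zsygmondy prime. Recall that $\Irr(G)$ is parametrized by functions $\mu$ assigning a partition $\mu(\phi)\in\Par$ to each Frobenius orbit $\phi$ of $\overline{\F}_q^{\times}$ with $\sum_\phi|\phi|\,|\mu(\phi)|=n$, that conjugacy classes carry data of the same shape with an explicit formula for $|C_G(g)|$, and that $|\Irr(\GL(m,q))|$ is comparable to $q^m$, uniformly in $q$. By Green's theory,
\[
d_\chi=q^{a(\mu)}\cdot\frac{\prod_{i=1}^{n}(q^i-1)}{\prod_{\phi}\prod_{h\in H(\mu(\phi))}\bigl(q^{|\phi|h}-1\bigr)},\qquad a(\mu)\ge 0,
\]
where $H(\lambda)$ is the multiset of hook lengths of $\lambda$. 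The structural fact I would record is: for $e\ge 1$, the cyclotomic polynomial $\Phi_e$ divides the polynomial $d_\chi$ to order $\lfloor n/e\rfloor-\beta_\chi(e)$, where $\beta_\chi(e):=\#\{(\phi,h):e\mid|\phi|h,\ h\in H(\mu(\phi))\}$, and divides $|G|$ to order $\lfloor n/e\rfloor$; moreover $\beta_\chi(e)\ge 1$ if and only if some $\mu(\phi)$ fails to be an $\bigl(e/\gcd(e,|\phi|)\bigr)$-core.

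Next I would isolate the arithmetic. Fix $e$ with $\sqrt n\le e\le n$ and, for $n\ge 49$, let $\ell$ be a primitive prime divisor of $q^e-1$ (Zsygmondy's theorem); since $\ell\equiv 1\pmod e$ we have $\ell>e\ge\sqrt n\ge\lfloor n/e\rfloor$ and $\ell e>n$. The inequality $\ell e>n$ forces $\ell\nmid\Phi_d(q)$ for all $d\le n$ with $d\ne e$, so with $m:=v_\ell(q^e-1)\ge 1$ one gets $v_\ell(d_\chi)=\bigl(\lfloor n/e\rfloor-\beta_\chi(e)\bigr)m$; and the lifting-the-exponent lemma together with $\ell>\lfloor n/e\rfloor$ gives
\[
v_\ell(|G|)=\sum_{k=1}^{\lfloor n/e\rfloor}v_\ell\bigl(q^{ke}-1\bigr)=\lfloor n/e\rfloor\,m+v_\ell\bigl(\lfloor n/e\rfloor!\bigr)=\lfloor n/e\rfloor\,m .
\]
Hence, if $\beta_\chi(e)=0$ and $q^e-1\mid|C_G(g)|$ — which holds whenever the characteristic polynomial of $g$ has an irreducible factor of degree $e$ — then
\[
v_\ell(d_\chi)-v_\ell(s_g)=v_\ell(|C_G(g)|)\ge m\ge 1,
\]
so $\ell$ divides $d_\chi/(d_\chi,s_g)$ and therefore $(d_\chi,s_g)/d_\chi\le 1/\ell<1/\sqrt n$. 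Thus, once $n>\varepsilon^{-2}$, condition \eqref{Lemma:B:first} holds for $(\chi,g)$ as soon as there is an $e\in[\sqrt n,n]$ with $\beta_\chi(e)=0$ for which $g$ has an irreducible factor of degree $e$.

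It remains to bound the set $\cR$ of pairs admitting no such $e$. Letting $e_g$ be the largest degree of an irreducible factor of the characteristic polynomial of $g$ and taking $e=e_g$, we obtain
\[
\cR\subseteq\{(\chi,g):e_g<\sqrt n\}\ \cup\ \{(\chi,g):e_g\ge\sqrt n,\ \beta_\chi(e_g)\ge 1\}.
\]
The first set is negligible: it is known that a uniformly random element of $\GL(n,q)$ has an irreducible factor of degree $\ge\sqrt n$ in its characteristic polynomial with probability tending to $1$, uniformly in $q$ — the $\GL(n,q)$ analogue of a random permutation having a cycle longer than $\sqrt n$, which comes out of the cycle-index machinery. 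Since $\chi$ and $g$ are independent on $\Irr(G)\times G$, the second set has relative size at most $\sup_{\sqrt n\le e\le n}\Prob_\chi\bigl(\beta_\chi(e)\ge 1\bigr)$, and the crux of the proof is to show that this supremum tends to $0$. I would prove it by a union bound over the orbits $\phi$: since $\Prob_\chi(\mu(\phi)=\nu)\le C\,q^{-|\phi|\,|\nu|}$ for an absolute constant $C$, and the non-$m$-cores of size $s$ number at most the number $p(s)$ of partitions of $s$, with $\sum_{s\ge m}p(s)\,q^{-ds}\ll \exp(O(\sqrt m))\,q^{-dm}$, the orbits with $e\mid|\phi|$ contribute $O\bigl(\tfrac1e\sum_{k\le n/e}\tfrac1k\bigr)=O(n^{-1/2}\log n)$, while those with $e\nmid|\phi|$ — for which $|\phi|\bigl(e/\gcd(e,|\phi|)-1\bigr)\ge e/2$ — contribute at most $\exp(O(\sqrt e))\,q^{-e/2}\log n$; both tend to $0$, uniformly in $q$, because $e\ge\sqrt n$. (Equivalently, this estimate can be read off from the generating function $\prod_{d\ge1}\bigl(\prod_j(1-t^j)^{-1}\bigr)^{N_d}$ for $|\Irr(\GL(n,q))|$, with $N_d$ the number of degree-$d$ Frobenius orbits, by substituting the $m$-core series $\prod_j(1-t^{mj})^{m}/(1-t^j)$ for the offending factors and bounding the resulting difference of coefficients of $t^n$.) Combining the two estimates yields $|\cR|/|\Irr(G)\times G|\to 0$, hence $\le\delta$ for $n$ large, which is \eqref{Lemma:B:second}. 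The step I expect to be the main obstacle is precisely this last counting estimate.
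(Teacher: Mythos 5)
Your proposal follows essentially the same route as the paper: choose a primitive prime divisor $\ell$ of $q^{e}-1$ for a large-degree ($e\ge\sqrt n$) irreducible factor of the characteristic polynomial of $g$, show that for almost all $\chi$ the full $\ell$-part of $|G|$ divides $d_\chi$ (via the same partition-counting and ``degree divisible by $e$'' estimates the paper uses), observe that $q^{e}-1$ divides $|C_G(g)|$ so that $\ell$ survives in $d_\chi/(d_\chi,s_g)$, and conclude $(d_\chi,s_g)/d_\chi\le 1/\ell<1/\sqrt n<\varepsilon$. The differences are only organizational --- Zsygmondy in place of the paper's explicit lower bound on $P_m(q)$, the largest-degree-factor fact (which indeed follows from the same Larsen--Shalev input the paper cites) in place of the bound on the number of irreducible factors plus squarefreeness, and a direct hook-divisibility count in place of the deficiency threshold --- so the plan is correct in substance.
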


\section{Proof of Theorem~\ref{Theorem:GL(n,q):Bound} using 
Lemmas~\ref{Lemma:A} and \ref{Lemma:B}}

For $G=\GL(n,q)$ and $\varepsilon>0$, Lemma~\ref{Lemma:A} 
gives 
\[P_{n,q}\leq Q_{n,q}+\varepsilon^2,\]
with $P_{n,q}$ the proportion of pairs $(\chi,g)$ with $\chi(g) \neq 0$ and $Q_{n,q}$ the proportion of pairs
with $(d_\chi,s_g)/d_\chi\geq\varepsilon$. 
Lemma~\ref{Lemma:B} gives
$Q_{n,q}\leq \delta$
for $n\ge N$.
Thus for $n$ sufficiently large,
\[P_{n,q}\le \delta+\varepsilon^2,\]
from which Theorem~\ref{Theorem:GL(n,q):Bound} follows.\qed

\section{Proof of Lemma~\ref{Lemma:A} by a device of Burnside}\label{Sect:Proof:Lemma:A}
For each $\chi\in \Irr(G)$ and 
$g\in G$, both $\chi(g)$ and $s_g\chi(g)/d_\chi$ are algebraic integers, 
 so for all $a,b\in\mathbb Z$, so is $(ad_\chi+b s_g)\chi(g)/d_\chi$. 
Choosing $a$ and $b$ so that $ad_\chi+bs_g$ is the greatest common divisor 
$(d_\chi,s_g)$ of $d_\chi$ and $s_g$, this gives 
\begin{equation}\label{chi:alpha:eq}
\chi(g)=\frac{d_\chi}{(d_\chi,s_g)}\alpha_{\chi,g},
\end{equation}
with $\alpha_{\chi,g}$ an algebraic integer in the cyclotomic 
field $\Q(\zeta_{|G|})$ with $\zeta_{|G|}=e^{2\pi i/|G|}$. 

From \eqref{chi:alpha:eq}, for each $\chi$,
\begin{equation}\label{G:Sum}
\sum_{g\in G}\bigl(\frac{d_\chi}{(d_\chi,s_g)}\bigr)^2|\alpha_{\chi,g}|^2=|G|.
\end{equation}
To \eqref{G:Sum}, apply elements $\sigma$ of the 
Galois group $\Gamma = \Gal(\mathbb \Q(\zeta_{|G|})/\Q)$, average over 
$\Gamma$, and use the fact, due to Burnside, that the average 
over $\Gamma$ of $|\sigma(\alpha)|^2$ is $\geq 1$ for each 
non-zero algebraic integer $\alpha\in\Q(\zeta_{|G|})$, \cite[p.~359]{PXGallagher2}. 
This gives, for each $\chi$, 
\begin{equation}\label{G:Sum:Ineq:1}
{\sum_{g\in G}}'\bigl(\frac{d_\chi}{(d_\chi,s_g)}\bigr)^2\leq |G|,
\end{equation}
the dash meaning that the sum is over those $g$ with $\chi(g)\neq 0$. 
From \eqref{G:Sum:Ineq:1},
\begin{equation}\label{G:Sum:Ineq:2}
\sum_{\chi\in\Irr(G)}{\sum_{g\in G}}'(\frac{d_\chi}{(d_\chi,s_g)})^2\leq |\Irr(G)||G|.
\end{equation}
From \eqref{G:Sum:Ineq:2}, the proportion, 
in $\Irr(G)\times G$, of pairs $(\chi,g)$ with both 
$\chi(g)\neq 0$ and $(d_\chi,s_g)/d_\chi\leq \varepsilon$ is at most 
$\varepsilon^2$, from which \eqref{Lemma:General:Bound} follows. 
\qed

\section{Number theoretic lemmas: partitions}

We denote by $p(n)$ the number of partitions of a non-negative integer $n$.

\begin{lemma}
\label{power2}
For each positive integer $n$, $p(n) \le 2^{n-1}$.
\end{lemma}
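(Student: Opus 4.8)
The plan is to bound $p(n)$ from above by the number of \emph{compositions} of $n$, i.e.\ the number of ways of writing $n$ as an \emph{ordered} sum of positive integers. First I would check that the number of compositions of $n$ is exactly $2^{n-1}$: a composition $n = a_1 + a_2 + \cdots + a_k$ is determined by, and conversely determines, its set of proper partial sums $\{a_1,\ a_1+a_2,\ \ldots,\ a_1+\cdots+a_{k-1}\}$, and as the composition varies this set ranges over \emph{all} subsets of $\{1, 2, \ldots, n-1\}$ (given such a subset, listed in increasing order as $s_1 < \cdots < s_m$, the associated composition is $(s_1,\ s_2-s_1,\ \ldots,\ s_m-s_{m-1},\ n-s_m)$). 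Since $\{1,\ldots,n-1\}$ has $2^{n-1}$ subsets, the count follows; for $n=1$ the set is empty and there is a single composition, consistent with $2^0 = 1$.

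Next I would exhibit an injection from the set of partitions of $n$ into the set of compositions of $n$: send a partition to the sequence of its parts listed in weakly decreasing order. Two partitions with the same weakly decreasing arrangement have the same multiset of parts and hence coincide, so this map is injective. Composing it with the bijection of the previous paragraph yields an injection from partitions of $n$ into the power set of $\{1,\ldots,n-1\}$, whence $p(n) \le 2^{n-1}$.

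There is no real obstacle here; the only points needing care are that the partial-sum correspondence is genuinely a bijection onto the subsets of $\{1,\ldots,n-1\}$, and that the decreasing-rearrangement map is well defined and one-to-one. One could instead argue by induction on $n$, splitting the partitions of $n$ according to whether or not they contain a part equal to $1$, but that route produces a messier estimate, so I would present the counting argument above.
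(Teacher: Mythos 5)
Your argument is correct, but it takes a different route from the paper. The paper proves the bound by induction: it splits the partitions of $n$ according to their smallest part $m$, notes that the number with smallest part $m$ is at most $p(n-m)$, and sums the resulting geometric-type bound $p(n)\le 1+p(1)+\cdots+p(n-1)\le 2^{n-1}$. You instead give a direct, non-inductive injection: a partition, read as its weakly decreasing sequence of parts, is a composition of $n$, and compositions of $n$ are in bijection with subsets of $\{1,\ldots,n-1\}$ via partial sums, so $p(n)\le 2^{n-1}$ at once. Your bijective argument is arguably more transparent about \emph{why} the bound is $2^{n-1}$ (it identifies the exact count of compositions), and it avoids induction entirely; the paper's induction is shorter to write and stays in the same style as its neighboring Lemma~\ref{phi}, which genuinely needs induction via the pentagonal number recurrence. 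Both are complete proofs of the stated inequality, and your two points of care (that the partial-sum map is a bijection onto subsets of $\{1,\ldots,n-1\}$, and that the decreasing rearrangement is injective) are exactly the right ones and are easily verified.
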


\begin{proof}
The base case $n=1$ is trivial.  For $n>1$,
the number of partitions with smallest part $m$ is at most $p(n-m)$, so
\[p(n) \le 1+p(1) + p(2) +\cdots + p(n-1) \le 1+1+2 +\cdots+2^{n-2} = 2^{n-1},\]
and the lemma follows by induction.
\end{proof}

\begin{lemma}
\label{phi}
Let $\phi := \frac{1+\sqrt 5}2$.  Then $p(n) \le \phi^n$ for all non-negative integers $n$.
\end{lemma}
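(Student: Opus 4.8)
The plan is to prove Lemma~\ref{phi} by induction on $n$, exploiting the defining relation $\phi^2=\phi+1$. The heart of the matter will be the Fibonacci-type recursion
\[
p(n)\le p(n-1)+p(n-2)\qquad(n\ge 2),
\]
which is sharper than the bound $p(n)\le 1+p(1)+\cdots+p(n-1)$ used for Lemma~\ref{power2}. Granting it, and noting $p(0)=1=\phi^0$ and $p(1)=1\le\phi$, the induction closes: for $n\ge 2$,
\[
p(n)\le p(n-1)+p(n-2)\le \phi^{n-1}+\phi^{n-2}=\phi^{n-2}(\phi+1)=\phi^{n}.
\]
So all the content is in the recursion.

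To obtain it, I would split the partitions of $n$ according to whether some part equals $1$. Those that do are in bijection with all partitions of $n-1$, by deleting one such part; they contribute $p(n-1)$. It then suffices to bound the number of partitions of $n$ with all parts $\ge 2$ by $p(n-2)$, and for this I would give an explicit injection into the partitions of $n-2$: given $\lambda=(\lambda_1\ge\cdots\ge\lambda_k)$ with every $\lambda_i\ge 2$, send $\lambda$ to $(\lambda_1-2)$ if $k=1$, and to $(\lambda_1,\dots,\lambda_{k-2},\,\lambda_{k-1}-1,\,\lambda_k-1)$ if $k\ge 2$. Since $\lambda_{k-1},\lambda_k\ge 2$, the decremented entries are still $\ge 1$, so no part disappears; the image is again weakly decreasing (as $\lambda_{k-2}\ge\lambda_{k-1}>\lambda_{k-1}-1$), hence a partition of $n-2$ with exactly $k$ parts. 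Therefore the number of parts, and then $\lambda$ itself, is recovered from the image in the $k\ge 2$ case, while the $k=1$ images are exactly the ones with at most one part; this gives injectivity and hence $p(n)-p(n-1)\le p(n-2)$.

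The main obstacle is choosing the right injection. The naive candidates — subtract $2$ from the largest part, subtract $1$ from each of the two largest parts, or subtract $1$ twice from the smallest part — all fail to be simultaneously order-preserving and injective, the recurring difficulty being that subtracting $1$ from a part equal to $1$ makes it vanish and loses information (the partition $(1,1,\dots,1)$ is the persistent culprit). Decrementing the two bottom rows of the Young diagram avoids this precisely because those rows have length $\ge 2$. Once the recursion is in hand, the two bijective/counting steps and the final induction are routine.
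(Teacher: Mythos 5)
Your proof is correct, but it reaches the key inequality $p(n)\le p(n-1)+p(n-2)$ by a different route than the paper. The paper invokes Euler's pentagonal number theorem, which gives the exact recurrence $p(n)=p(n-1)+p(n-2)-p(n-5)-p(n-7)+\cdots$ with sign pattern $++--\cdots$, and then discards the trailing terms using the monotonicity of $p$; from there the induction via $\phi^2=\phi+1$ is the same as yours. You instead prove the inequality combinatorially: partitions of $n$ containing a part $1$ biject with partitions of $n-1$, and your injection for partitions with all parts $\ge 2$ (subtract $2$ from the unique part if $k=1$, otherwise decrement the two smallest parts) is well defined and injective, since the image retains exactly $k$ parts when $k\ge 2$ (so the case $k=1$, whose images have at most one part, cannot collide with it) and $\lambda$ is recovered by restoring the last two entries. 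Your argument is more elementary and self-contained, trading a citation of a nontrivial classical theorem for a short bijective lemma; the paper's version is shorter on the page given that it is willing to quote the pentagonal number theorem, and its exact recurrence also yields the monotonicity-based truncation with no extra combinatorial bookkeeping. Either way the Fibonacci-type bound is the whole content, and your induction step and base cases are handled correctly.
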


\begin{proof}
The partition function is non-decreasing since the number of partitions of $n+1$ with a part of size $1$ is $p(n)$.
The lemma holds for $n\in \{0,1\}$.  For $n\ge 2$, the pentagonal number theorem implies
\begin{equation}
\label{recur}
p(n) = p(n-1) + p(n-2) - p(n-5) - p(n-7) + p(n-12) + \cdots,
\end{equation}
with sign pattern $++--++--++--\cdots$ and where the sum on the right-hand side terminates at the last term $\pm p(n-m)$, where $m$ is the largest generalized pentagonal number for which $n\ge m$.
By monotonicity, the right-hand side of \eqref{recur} is at most $p(n-1)+p(n-2)$, so the lemma follows by induction on $n$.
\end{proof}

\begin{lemma}
\label{rare}
There exists $\gamma<1$ such that if $q\ge 2$ and $a$ and $b$ are positive integers such that $a(b-1) \ge N\geq 0$, then
\[\frac{p(b)}{q^{a(b-1)}} < 2\gamma^N.\]
\end{lemma}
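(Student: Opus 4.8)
The plan is to reduce the estimate to Lemma~\ref{phi}, which gives $p(b)\le\phi^b$, together with the trivial bound $q^{a(b-1)}\ge 2^{a(b-1)}$ coming from $q\ge 2$. The one observation needed is that, since $a\ge 1$ and $b-1\ge 0$, we have $b-1\le a(b-1)$, hence $b\le a(b-1)+1$. Substituting this into Lemma~\ref{phi} yields $p(b)\le\phi^b\le\phi\cdot\phi^{a(b-1)}$, and therefore
\[
\frac{p(b)}{q^{a(b-1)}}\le\frac{\phi\cdot\phi^{a(b-1)}}{2^{a(b-1)}}=\phi\left(\frac{\phi}{2}\right)^{a(b-1)}.
\]

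Next I would set $\gamma:=\phi/2=\frac{1+\sqrt5}{4}$, which is the constant asserted by the lemma; one checks $\gamma<1$ (indeed $\gamma\approx 0.809$). Since $0<\gamma<1$, the function $x\mapsto\gamma^x$ is decreasing, so the hypothesis $a(b-1)\ge N$ gives $\gamma^{a(b-1)}\le\gamma^N$. Combining this with the display above and using $\phi<2$,
\[
\frac{p(b)}{q^{a(b-1)}}\le\phi\,\gamma^{a(b-1)}\le\phi\,\gamma^N<2\gamma^N,
\]
which is the claim.

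I do not expect any genuine obstacle here: the estimate is soft, and the only thing to notice is the inequality $b\le a(b-1)+1$, which lets one absorb the extra factor $\phi^b$ into the exponent of the denominator; after that, the factor $2$ on the right-hand side is exactly what is needed to swallow $\phi<2$. The degenerate case $b=1$ (which forces $N=0$) is covered by the same chain, since then $b=a(b-1)+1=1$ and the bound reads $1<2$. The only real choice is that of $\gamma$, and it is worth remarking that any constant strictly between $\phi/2$ and $1$ would serve equally well; the use of Lemma~\ref{phi} rather than the weaker Lemma~\ref{power2} is essential precisely because when $q=2$ and $a=1$ one needs $p(b)$ to grow strictly slower than $2^{b}$.
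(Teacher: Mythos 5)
Your proof is correct, gives the same constant $\gamma=\phi/2$, and verifies the strict inequality with the factor $2$ absorbing $\phi<2$, so nothing is missing; but it takes a genuinely more uniform route than the paper. The paper splits into two cases: for $a=1$ it runs exactly your computation (there $b-1\ge N$, and Lemma~\ref{phi} gives $p(b)/2^{b-1}\le\phi(\phi/2)^{b-1}<2(\phi/2)^N$), while for $a\ge 2$ it switches to the cruder bound $p(b)\le 2^{b-1}$ of Lemma~\ref{power2}, using $a(b-1)\le 2(a-1)(b-1)$ to get $p(b)/2^{a(b-1)}\le (1/\sqrt2)^{N}<2(1/\sqrt2)^N$, and then takes $\gamma=\phi/2>1/\sqrt2$ to cover both cases. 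Your observation that $(a-1)(b-1)\ge 0$, i.e.\ $b\le a(b-1)+1$, lets you absorb $\phi^b$ into $\phi\cdot\phi^{a(b-1)}$ and run the $a=1$ computation for all $a$ at once; this eliminates the case split and, as a by-product, removes the only use of Lemma~\ref{power2} in the paper. What the paper's version buys in exchange is the (minor) remark that for $a\ge 2$ one gets the stronger decay rate $1/\sqrt2$ per unit of $N$, whereas your uniform bound only records $\phi/2$; since the final statement only needs some $\gamma<1$, this costs nothing.
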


\begin{proof}
It suffices to prove the lemma for $q=2$.  For $a=1$, we have $b-1\ge N$, so Lemma~\ref{phi} implies
\[\frac{p(b)}{2^{a(b-1)}} = \frac{p(b)}{2^{b-1}} < 2(\phi/2)^N.\]
For $a\ge 2$, $a(b-1) \le 2(a-1)(b-1)$, so by Lemma~\ref{power2},
\[\frac{p(b)}{2^{a(b-1)}} \le 2^{-(a-1)(b-1)} \le  (1/\sqrt2)^N < 2(1/\sqrt2)^N.\]
Therefore, we may take $\gamma = \phi/2 > 1/\sqrt 2$.
\end{proof}
\section{Number theoretic lemmas: cyclotomic polynomials}

For $n$ a positive integer, let $\Phi_n(x)$ denote the minimal polynomial over $\Q$ of~$e^{2\pi i/n}$.  Thus
\begin{equation}
\label{standard}
x^n-1 = \prod_{d\vert n} \Phi_d(x),
\end{equation}
so by M\"obius inversion,
\begin{equation}
\label{invert}
\Phi_n(x) = \prod_{d\vert n} (x^{n/d}-1)^{\mu(d)}.
\end{equation}

For any prime $\ell$, let $\ord_\ell(x)$ denote the largest integer $e$ such that $\ell^e$ divides $x$.

\begin{lemma}
\label{cong}
Let $\ell$ be a prime, $e$ a positive integer, and $n$ an integer such that  $\ord_\ell(n-1)=e$.
\begin{enumerate}[\rm(i)]
\item\label{cong:prime}
If $k$ is a positive integer prime to $\ell$, then $\ord_\ell(n^k-1) = e$.  
\item\label{cong:odd} If $\ell$ is odd and $\ord_\ell(k) = 1$, then $\ord_\ell(n^k-1) = e+1$.
\end{enumerate}
\end{lemma}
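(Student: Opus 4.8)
\section*{Proof proposal for Lemma~\ref{cong}}

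The plan is to prove both parts by the elementary factorization $n^k-1=(n-1)\sum_{j=0}^{k-1}n^j$ together with a binomial expansion, i.e.\ a lifting-the-exponent argument.

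For part~\eqref{cong:prime}, I would write $n^k-1=(n-1)\,(n^{k-1}+n^{k-2}+\cdots+n+1)$. Since $\ord_\ell(n-1)=e\ge 1$ we have $n\equiv 1\pmod\ell$, so each summand satisfies $n^j\equiv 1\pmod\ell$ and the second factor is $\equiv k\pmod\ell$. As $\ell\nmid k$, the second factor has $\ord_\ell$ equal to $0$, whence $\ord_\ell(n^k-1)=\ord_\ell(n-1)=e$.

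For part~\eqref{cong:odd}, I would first treat the case $k=\ell$. Write $n=1+\ell^e u$ with $\ell\nmid u$, which is possible because $\ord_\ell(n-1)=e$. Expanding,
\[
n^\ell-1=\sum_{j=1}^{\ell}\binom{\ell}{j}\ell^{je}u^j=\ell^{e+1}u+\sum_{j=2}^{\ell}\binom{\ell}{j}\ell^{je}u^j .
\]
The leading term has $\ord_\ell$ exactly $e+1$. For $2\le j\le \ell-1$ the prime $\ell$ divides $\binom{\ell}{j}$, so the $j$-th term has $\ord_\ell\ge 1+je>e+1$ (using $j\ge 2$, $e\ge 1$); and the $j=\ell$ term has $\ord_\ell=\ell e\ge 3e>e+1$ (using $\ell\ge 3$, $e\ge 1$). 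Hence all terms beyond the first have $\ord_\ell$ strictly larger than $e+1$, and $\ord_\ell(n^\ell-1)=e+1$. For a general $k$ with $\ord_\ell(k)=1$, write $k=\ell m$ with $\ell\nmid m$; then $n^k-1=(n^\ell)^m-1$, and applying part~\eqref{cong:prime} with $n^\ell$ in place of $n$ (legitimate since $\ord_\ell(n^\ell-1)=e+1\ge 1$) and exponent $m$ prime to $\ell$ gives $\ord_\ell(n^k-1)=\ord_\ell(n^\ell-1)=e+1$.

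The only place the hypothesis ``$\ell$ odd'' is used, and the only mildly delicate point, is the verification that every term past the leading one in the binomial expansion has $\ell$-valuation strictly exceeding $e+1$: this relies on $\ell\ge 3$, since for $\ell=2$ the final term $u^2\,2^{2e}$ has valuation $2e$, which fails to exceed $e+1$ when $e=1$ (and indeed the conclusion of~\eqref{cong:odd} is then false). So the ``hard part'' is merely careful bookkeeping of these valuations; everything else is the standard factorization.
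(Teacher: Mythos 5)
Your proof is correct and takes essentially the same route as the paper: reduce part~(ii) to the case $k=\ell$ via part~(i), then expand $(1+\ell^e u)^\ell$ and check that every term past $\ell^{e+1}u$ has $\ell$-valuation strictly greater than $e+1$ (which is exactly where $\ell$ being odd enters). The only cosmetic difference is in part~(i), where you factor $n^k-1=(n-1)(n^{k-1}+\cdots+1)$ and reduce the second factor mod $\ell$, while the paper expands $(1+m\ell^e)^k$ modulo $\ell^{2e}$; both are one-line lifting-the-exponent arguments.
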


\begin{proof}
Let $n=1+m \ell^e $, where $\ell\nmid m$.  By the binomial theorem,
\[n^k\equiv 1 + km \ell^e \pmod{\ell^{2e}},\]
which implies claim \eqref{cong:prime}.  For claim \eqref{cong:odd}, using part \eqref{cong:prime}, it suffices to treat the case $k=\ell$, for which we have
\[n^\ell\equiv 1+ m \ell^{e+1} + \frac{m^2(\ell-1)}2 \ell^{2e+1} \pmod{\ell^{3e}}.\qedhere\]
\end{proof}

\begin{lemma}
\label{big-factor}
Suppose $n>0$ and $a>1$ are integers.  We factor $\Phi_n(a)$ as $P_n(a)R_n(a)$, where $P_n(a)$ is relatively prime to $n$ and $R_n(a)$ factors into prime divisors of $n$.  
\begin{enumerate}[\rm(i)]
\item\label{b-f:mod} Every prime divisor of $P_n(a)$ is $\equiv 1$ (mod $n$).
\item\label{b-f:s-f} If $n\ge 3$, $R_n(a)$ is a square-free divisor of $n$.
\item\label{b-f:bound} For $n\ge 3$, $P_n(a) > 2^{\sqrt{n/2}-\log_2 n - 2}$.
\item\label{b-f:ord} If $m\ell>n$ and $\ell$ is a prime divisor of $P_m(a)$, then
\[\ord_\ell (a^n-1) = \begin{cases}  \ord_\ell P_m(a)&\text{if $m\mid n$,} \\ 0&\text{otherwise.}\end{cases}\]
\end{enumerate}

\end{lemma}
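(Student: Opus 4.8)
My plan is to deduce all four parts from one basic fact about a prime $\ell$ dividing $\Phi_n(a)$: such an $\ell$ is prime to $a$, the order $d=\ord_\ell(a)$ divides $n$, and $d=n$ unless $\ell\mid n$. To prove this, write $x^n-1=\Phi_n(x)\,g(x)$ with $g=\prod_{e\mid n,\,e\ne n}\Phi_e$; if $d<n$ then $x^d-1=\prod_{e\mid d}\Phi_e$ divides $g$, so $g(a)\equiv0\pmod\ell$, and differentiating the identity and evaluating at $a$ gives $n\,a^{n-1}=\Phi_n'(a)\,g(a)+\Phi_n(a)\,g'(a)\equiv0\pmod\ell$, forcing $\ell\mid n$. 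In particular a prime divisor $\ell$ of $P_n(a)$ satisfies $\ell\nmid n$, hence $d=n$ and $n\mid\ell-1$: this is part~\eqref{b-f:mod}.

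For part~\eqref{b-f:s-f}, let $\ell\mid n$ with $\ell\mid\Phi_n(a)$ and write $n=\ell^e m$ with $\ell\nmid m$ and $e\ge1$. Since $\ord_\ell(a)$ is prime to $\ell$ and divides $n$, it divides $m$, so $\ell\mid a^m-1$. The divisors of $\ell^e m$ not dividing $\ell^{e-1}m$ are exactly the numbers $\ell^e d$ with $d\mid m$, so $\Phi_n(a)$ is one of the factors of $(a^{\ell^e m}-1)/(a^{\ell^{e-1}m}-1)$. Applying Lemma~\ref{cong} with base $a^m$ (part~\eqref{cong:odd} when $\ell$ is odd, and the analogous $2$-adic lifting-the-exponent identity when $\ell=2$ and $e\ge2$) gives $\ord_\ell(a^{\ell^{j}m}-1)=\ord_\ell(a^{m}-1)+j$, so this quotient, and hence $\Phi_n(a)$, has $\ell$-adic valuation at most $1$. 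The only case this misses is $\ell=2$, $e=1$, that is, $n=2m$ with $m$ odd and $m\ge3$; there $\Phi_{2m}(a)$ divides $a^m+1=(a+1)\cdot\frac{a^m+1}{a+1}$, whose cofactor is odd, so $2\nmid\Phi_{2m}(a)$. Thus $\ord_\ell\Phi_n(a)\le1$ for every prime $\ell\mid n$, so $R_n(a)$ is a square-free divisor of $n$; the hypothesis $n\ge3$ is precisely what excludes $\Phi_2(a)=a+1$, for which $R_2(a)=2^{\ord_2(a+1)}$ need not be square-free.

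For part~\eqref{b-f:bound}, I would start from M\"obius inversion \eqref{invert}, which gives $\Phi_n(a)=\prod_{d\mid n}(a^{n/d}-1)^{\mu(d)}$; bounding $a^{n/d}-1\ge a^{n/d-1}$ in the numerator ($\mu(d)=1$) and $a^{n/d}-1<a^{n/d}$ in the denominator ($\mu(d)=-1$), and using $\sum_{d\mid n}\mu(d)(n/d)=\varphi(n)$ and $\#\{d\mid n:\mu(d)=1\}=2^{\omega(n)-1}$ (where $\omega(n)$ is the number of distinct primes dividing $n$), yields
\[\Phi_n(a)>a^{\varphi(n)-2^{\omega(n)-1}}\ge 2^{\varphi(n)-2^{\omega(n)-1}}.\]
Since $R_n(a)\le n$ by part~\eqref{b-f:s-f}, this gives $P_n(a)>2^{\varphi(n)-2^{\omega(n)-1}-\log_2 n}$, so \eqref{b-f:bound} reduces to the elementary estimate $\varphi(n)-2^{\omega(n)-1}\ge\sqrt{n/2}-2$. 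I would obtain this from $\varphi(n)^2\ge\tfrac12\,n\,(4/3)^{\omega(n)-1}$, proved by evaluating the multiplicative function $\varphi(p^k)^2/p^k$ prime by prime, together with the observation that the product of the $\omega(n)$ smallest primes outgrows $4^{\omega(n)}$ once $\omega(n)$ is moderately large, so the exponential error term $2^{\omega(n)-1}$ is dominated; the few remaining $n$ with small $\omega(n)$ are checked directly. I expect this reconciliation of the exponential-in-$\omega(n)$ error against the true size of $\varphi(n)$ to be the one mildly delicate point of the lemma, although it is routine.

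Finally, for part~\eqref{b-f:ord}: if $\ell\mid P_m(a)$ then, exactly as in part~\eqref{b-f:mod}, $\ell\nmid m$ and $\ord_\ell(a)=m$, so $\ell\mid a^n-1$ if and only if $m\mid n$, which settles the case $m\nmid n$. If $m\mid n$, write $n=mk$; the hypothesis $m\ell>n$ forces $k<\ell$, so $k$ is prime to $\ell$, and part~\eqref{cong:prime} of Lemma~\ref{cong}, applied with base $a^m\equiv1\pmod\ell$, gives $\ord_\ell(a^n-1)=\ord_\ell(a^m-1)$. Now $\ord_\ell(a^m-1)=\ord_\ell\Phi_m(a)$, because $\ord_\ell(a)=m$ implies $\ell$ divides no factor $\Phi_d(a)$ of $a^m-1$ with $d\mid m$ and $d<m$; and $\ord_\ell\Phi_m(a)=\ord_\ell P_m(a)$, because $R_m(a)\mid m$ is prime to $\ell$. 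This yields the asserted value $\ord_\ell P_m(a)$ when $m\mid n$, completing the proof.
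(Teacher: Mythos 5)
Your proposal is correct, but it reaches the four statements by a genuinely different route from the paper, so let me compare. For \eqref{b-f:mod} you differentiate $x^n-1=\Phi_n(x)g(x)$ to show that a prime $\ell\mid\Phi_n(a)$ with $\ord_\ell(a)<n$ must divide $n$; the paper instead telescopes the product \eqref{invert} and uses Lemma~\ref{cong}\eqref{cong:prime} to show that the square-free part $s$ of $n/\ord_\ell(a)$ can only be $1$ or $\ell$, and then deduces \eqref{b-f:s-f} from Lemma~\ref{cong}\eqref{cong:odd} together with the explicit form of $\Phi_{2^t}$, whereas you compute $\ord_\ell$ of the single quotient $(a^{\ell^e m}-1)/(a^{\ell^{e-1}m}-1)$ by lifting the exponent and treat $n=2m$ with $m$ odd separately; both work, but two small repairs are needed in yours: the asserted identity $\ord_2(a^{2^jm}-1)=\ord_2(a^m-1)+j$ is false at the first step when $a^m\equiv3\pmod4$ --- what you actually use, and what is true for $e\ge2$, is that $a^{2^{e-1}m}$ is an odd square, so $a^{2^{e-1}m}+1\equiv2\pmod4$ and the quotient has $2$-adic valuation $1$; and in the case $n=2m$ you should note that $(x+1)\Phi_{2m}(x)$ divides $x^m+1$, so $\Phi_{2m}(a)$ divides the odd cofactor $(a^m+1)/(a+1)$ --- divisibility of $a^m+1$ alone would not prevent $\Phi_{2m}(a)$ from absorbing the $2$-part of $a+1$. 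For \eqref{b-f:bound} your term-by-term estimate loses a factor $a^{2^{\omega(n)-1}}$ and hence forces the auxiliary inequality $\varphi(n)-2^{\omega(n)-1}\ge\sqrt{n/2}-2$, which is indeed true and can be finished along the lines you sketch (the product of the first $\omega$ primes eventually dominates $4^{\omega}$, and small $\omega$ is checked by hand), but the paper sidesteps this entirely by writing $\Phi_n(a)=a^{\varphi(n)}\prod_{d\mid n}(1-a^{-n/d})^{\mu(d)}\ge a^{\varphi(n)}\prod_{i\ge1}(1-2^{-i})>2^{\varphi(n)}/4$, so that only $\varphi(n)\ge\sqrt{n/2}$ is needed; your route is workable but strictly more delicate. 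For \eqref{b-f:ord} your argument ($\ord_\ell(a)=m$, then $n/m<\ell$ by the hypothesis $m\ell>n$, Lemma~\ref{cong}\eqref{cong:prime} applied with base $a^m$, and $R_m(a)$ prime to $\ell$) is, if anything, more direct than the paper's elimination of the intermediate factors $\Phi_{n'}(a)$ for $m\mid n'\mid n$.
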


\begin{proof}
Fix any prime $\ell$ which divides $\Phi_n(a)$.  As $\ell\mid a^n-1$, $a$ is not divisible by $\ell$, so it represents a class in $\F_\ell^\times$.
Let $k$ be the order of this class.  As $a^n\equiv 1\pmod\ell$, $k\mid n$. 
Let $s$ denote the largest square-free divisor of $n/k$.  By \eqref{invert},
\[\ord_\ell\Phi_n(a) = \ord_\ell \prod_{d\mid s} (a^{n/d}-1)^{\mu(d)}.\]

Now, if $s$ can be written $p s'$ for some prime $p\neq \ell$,
\begin{equation}
\label{cube}
\prod_{d\mid s} (a^{n/d}-1)^{\mu(d)} = \prod_{d\mid s'}  \Bigl(\frac{a^{n/d}-1}{a^{n/pd}-1}\Bigr)^{\mu(d)}.
\end{equation}
Applying part \eqref{cong:prime} of Lemma~\ref{cong} with $k=p$, the above formula implies $\ord_\ell \Phi_n(a) = 0$, contrary to assumption.
Since $s$ is square-free, it follows that it can only be $1$ or $\ell$.

If $\ell$ divides $P_n(a)$, then it does not divide $n$.  That means $s=1$, so the class of $a$ has order $n$ in a group of order $\ell-1$.  
This implies part \eqref{b-f:mod}.  Conversely, if $\ell$ does divide $n$, it cannot be $1$ (mod $n$), so $s=\ell$.

If $s=\ell>2$, then $d$ square-free and $\ord_\ell (a^{n/d}-1) > 0$ implies $d\in \{1,\ell\}$.
Therefore, part \eqref{cong:odd} of Lemma~\ref{cong} implies that the left-hand side of \eqref{cube} has $\ord_\ell$ equal to $1$.
If $s=\ell=2$, then $k=1$, so we need only consider the case that $n$ is a power of~$2$.  For $t\ge 2$, $\Phi_{2^t}(x) = (x^{2^{t-2}})^2+1$,
so plugging in $a$, the result has at most one factor of $2$.  This gives claim \eqref{b-f:s-f}.

By \eqref{invert}, 
\begin{equation}
\label{quarter}
\Phi_n(a) \ge a^{\deg\Phi_n}\prod_{i=1}^\infty (1-a^{-1}) \ge a^{\phi(n)} \prod_{i=1}^\infty (1-2^{-1}) \ge \frac{2^{\phi(n)}}4.
\end{equation}
As $\phi(p^e) \ge \sqrt{p^e}$ except when $p^e=2$, the multiplicativity of $\phi$ implies $\phi(n) \ge \sqrt{n/2}$.  By part \eqref{b-f:s-f}, $R_n(a) \le n$, and claim \eqref{b-f:bound} follows.

If $\ell$ divides $P_m(a)$, then the image of $a$ in $\F_\ell^\times$ is of order $m$, so $\ell$ divides $a^n-1$ only if $n$ is divisible by $m$.
In that case, $P_m(a)$ divides $\Phi_m(a)$, which is a divisor of $a^m-1$ and therefore $a^n-1$.  Moreover, $\ell$ does not divide $m$, so $\ord_\ell P_m(a) = \ord_\ell \Phi_m(a)$.
To prove \eqref{b-f:ord}, it remains to show that $a^n-1$ has no additional factors of $\ell$ beyond those in $a^m-1$.
It suffices to prove that $\Phi_{n'}(a)$ is not divisible by $\ell$ if $n'$  is a divisor of $n$ and $m$ is a proper divisor of $n'$.  Indeed, $\ell$ does not divide $P_{n'}(a)$
because $a$ is not of order exactly $m'$ (mod $\ell$).  If it divides $\Phi_{n'}(a)$, it must divide $R_{n'}(a)$, so it must divide $n'$.  It does not divide $m$, so it must divide $n'/m\le m$.
This is ruled out by \eqref{b-f:mod}.
\end{proof}

\section{Irreducible characters of $\GL(n,q)$}
In what follows, $G=\GL(n,q)$.
By \cite[Proposition~3.5]{FG},
\begin{equation}\label{FG}
\frac {q^n}2\le |\Irr(G)| =|\Cl(G)|\le q^n.
\end{equation}

Denote by $\cP$ the set of all integer partitions $\lambda$ (including the empty partition~$\emptyset$) and
by $\cF$ the set of all non-constant monic irreducible polynomials 
$f(x)\in \mathbb F_q[x]$ with non-zero constant term.
We define the \emph{degree} of $\nu$ as follows:
\[ \deg(\nu) := \sum_{f\in  \cF} \deg(f)|\nu(f)|.\]
By Jordan decomposition, there is a natural bijection between conjugacy classes in $G$
and maps $\nu:\cF \to  \cP$ of degree $n$.
Green  \cite{Green} introduced the set $\cG$ of \emph{simplices}
and proved (Theorem~12) that $\Irr(G)$ has a parametrization by
maps $\nu: \cG\to \cP$ satisfying
\[\sum_{f\in  \cG} \deg(f)|\nu(f)|=n.\]
By fixing in a compatible way multiplicative generators of finite fields, he gave a degree-preserving bijection between
$\cF$ and $\cG$.  
We will ignore the distinction between $\cF$ and $\cG$ henceforward.
The same theorem of Green also gave a formula for the degree of the irreducible character $\chi$ associated to $\nu$.  It can be written
\begin{equation}
\label{dchi}
d_\chi = q^{N_\nu}\frac{\prod_{i=1}^n (q^i-1)}{\prod_{f\in \cF}\prod_{i=1}^{|\nu(f)|} (q^{h_{\nu(f),i}\deg(f)}-1)},
\end{equation}
where $N_\nu$ is a certain non-negative integer, and the $h_{\lambda,i}$ are the hook lengths of the partition~$\lambda$; in particular these are positive integers $\le |\lambda|$.

By the \emph{support of $\nu$}, which we denote $\supp\nu$, we mean the set of $f\in \cF$ such that $\nu(f)\neq \emptyset$.

\begin{lemma}
\label{deficiency}
Let $\gamma$ be defined as in Lemma~\ref{rare}, and let $N$ be a positive integer.  Then the number of degree $n$ functions $\nu\colon \cF\to \cP$ satisfying
$\deg(f) (|\nu(f)|-1) \ge N$ for some $f$ is less than $\frac {2N\gamma^N}{(1-\gamma)^2}q^n$.
\end{lemma}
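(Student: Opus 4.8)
The plan is to estimate the count by a union bound over a ``witness'' for the stated property, the one subtlety being the choice of summation variable. Rather than summing over the witness polynomial $f$, I will sum over the integer $\ell:=\deg(f)\,(|\nu(f)|-1)$. This quantity certifies the property (the condition on $\nu$ is exactly that $\ell\ge N$ for some $f$) and at the same time records that the witness consumes $\ell+\deg(f)$ of the $n$ available units of degree, so that the number of ways to complete $\nu$ away from $f$ decays geometrically in $\ell$.

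In detail: suppose $\nu\colon\cF\to\cP$ has degree $n$ and $\deg(f)(|\nu(f)|-1)\ge N$ for some $f$. Fix such an $f$ and set $\ell=\deg(f)(|\nu(f)|-1)$, $d=\deg(f)$; then $d\mid\ell$, $|\nu(f)|=\ell/d+1$, and the restriction of $\nu$ to $\cF\setminus\{f\}$ has degree $n-\ell-d$. Thus $\nu$ is determined by the data: an integer $\ell\ge N$; a divisor $d$ of $\ell$; a polynomial in $\cF$ of degree $d$; a partition of $\ell/d+1$ (the value $\nu(f)$); and a degree-$(n-\ell-d)$ function $\cF\setminus\{f\}\to\cP$. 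The number of polynomials in $\cF$ of degree $d$ is at most $q^d/d$ (an irreducible of degree $d$ has $d$ distinct roots in $\F_{q^d}$, and distinct irreducibles have disjoint root sets); the number of partitions of $\ell/d+1$ is $p(\ell/d+1)$; and, since functions $\cF\setminus\{f\}\to\cP$ embed by extension with $\emptyset$ at $f$ into the functions $\cF\to\cP$ of the same degree, the Jordan-decomposition bijection and \eqref{FG} bound the number of degree-$(n-\ell-d)$ ones by $|\Cl(\GL(n-\ell-d,q))|\le q^{n-\ell-d}$. Hence the number of $\nu$ in question is at most
\[
\sum_{\ell\ge N}\,\sum_{d\mid\ell}\frac{q^d}{d}\,p(\ell/d+1)\,q^{n-\ell-d}\ =\ q^n\sum_{\ell\ge N}q^{-\ell}\sum_{d\mid\ell}\frac{p(\ell/d+1)}{d}.
\]

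To conclude, I would apply Lemma~\ref{rare} with $a=d$ and $b=\ell/d+1$, so that $a(b-1)=\ell$, and with the lemma's threshold taken to be $\ell$ itself; this gives $p(\ell/d+1)<2\gamma^\ell q^\ell$ for every divisor $d$ of $\ell$. Since a positive integer $\ell$ has at most $\ell$ divisors, $\sum_{d\mid\ell}1/d\le\ell$, so the inner sum over $d$ is less than $2\ell\gamma^\ell q^\ell$, the displayed quantity is less than $2q^n\sum_{\ell\ge N}\ell\gamma^\ell$, and since $\sum_{\ell\ge N}\ell\gamma^\ell=\gamma^N(N-(N-1)\gamma)(1-\gamma)^{-2}\le N\gamma^N(1-\gamma)^{-2}$ this is at most $\frac{2N\gamma^N}{(1-\gamma)^2}q^n$, as asserted.

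The crux is the parametrization, not any individual estimate. If one runs the union bound over the witness polynomial $f$ directly, one is left summing $q^{-\deg f}$ over the relevant $f$, and because $\cF$ contains on the order of $q^d/d$ polynomials of each degree $d$ up to about $n$, that sum grows like $\log n$ and the bound is not uniform in $n$ --- which is precisely what \eqref{Lemma:B:second}, hence Theorem~\ref{Theorem:GL(n,q):Bound}, requires. Summing over $\ell$ instead --- equivalently, charging the witness $\ell+\deg(f)$ units of degree rather than just $\deg(f)$ --- replaces that harmonic-type divergence by the convergent geometric sum $\sum_{\ell\ge N}\ell\gamma^\ell$, which is exactly where the factor $N(1-\gamma)^{-2}$ comes from. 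Everything else is routine bookkeeping with the partition estimates established earlier.
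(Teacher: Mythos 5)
Your proof is correct and is essentially the paper's own argument: the paper likewise stratifies by the value $m=\deg(f)(|\nu(f)|-1)\ge N$, sums over the factorizations $m=a(b-1)$, the witness $f$ of degree $a$, and the partition of $b$ (bounded via Lemma~\ref{rare}), bounds the completions by $q^{n-m-a}$ using \eqref{FG}, and then sums the resulting series $\sum_{m\ge N}2m\gamma^m<\frac{2N\gamma^N}{(1-\gamma)^2}$. Your bookkeeping differs only cosmetically (counting $q^d/d$ irreducibles and using $\sum_{d\mid\ell}1/d\le\ell$ where the paper uses fewer than $q^a$ irreducibles and at most $m$ factorizations), so the ``sum over $\ell$ rather than over $f$'' framing is not actually a departure from the paper.
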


\begin{proof}
It suffices to prove that for each $m$, the number of choices of $\nu$ of degree $n$ such that for some $f\in \cF$, $\deg(f) (|\nu(f)|-1) = m$ is less than $2m\gamma^mq^n$.
Since there are at most $m$ ways of expressing $m$ as $a(b-1)$ for positive integers $a$ and $b$, it suffices to prove that there are less than $2\gamma^m q^n$
such $\nu$ of degree $n$ for which $|\nu(f)| = b$ for some $f\in \cF$ of degree $a$.  Since there are fewer than $q^a$ elements of $\cF$ of degree $a$, it suffices to prove
that for given $f\in \cF$ of degree $a$, there are at most $2\gamma^m q^{n-a}$ possibilities for $\nu$ with $|\nu(f)| = b$.  For each partition $\lambda$ of $b$, the functions $\nu$ of degree $n$ with $\nu(f) = \lambda$ can be put into bijective correspondence with $\nu'$ of degree $n-ab$
with $\nu'(f)=\emptyset$.  By \eqref{FG}, the number of possibilities for $\nu'$ and therefore for $\nu$ is at most $q^{n-ab} = q^{n-m-a}$.   Summing over the possibilities for $\lambda$, 
which by Lemma~\ref{rare} number less than $2\gamma^m q^m$, we obtain less than $2\gamma^m q^{n-a}$ possibilities for $\nu$ with $|\nu(f)|=b$, as claimed.
\end{proof}

We define the \emph{deficiency} of a character of $G$ or of the associated $\nu\colon \cF\to\cP$ to be the maximum of $\deg(f) (|\nu(f)|-1)$ over all $f\in \cF$.  Together, Lemma~\ref{deficiency}
and \eqref{FG} imply that for all $\varepsilon > 0$ there exists an $N$ such that for all $n$ and $q$, the proportion of irreducible characters of $\GL(n,q)$ with deficiency $<N$ is at least $1-\varepsilon$.
\begin{lemma}
\label{ord-l}
Let $m$ be a positive integer and $\ell$ a prime such that $\ell m > n$ and $\ord_\ell P_m(q) = e>0$.  Let $\chi$ be a character whose deficiency is less than $m/2$.  Then
\begin{align*}
\ord_\ell d_\chi &= e \lfloor n/m\rfloor - e|\{f\in \supp\nu\mid \deg(f)\in m\Z\}|\\
&= \ord_\ell |G| - e|\{f\in \supp\nu\mid \deg(f)\in m\Z\}|.
\end{align*}
\end{lemma}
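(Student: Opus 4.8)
The plan is to compute $\ord_\ell$ of both sides of Green's degree formula \eqref{dchi}. Since $\ell$ divides $P_m(q)$, which divides $\Phi_m(q)$ and hence $q^m-1$, the prime $\ell$ does not divide $q$, so the factor $q^{N_\nu}$ is invisible to $\ord_\ell$ and
\[
\ord_\ell d_\chi=\sum_{i=1}^{n}\ord_\ell(q^i-1)-\sum_{f\in\supp\nu}\;\sum_{i=1}^{|\nu(f)|}\ord_\ell\bigl(q^{h_{\nu(f),i}\deg(f)}-1\bigr).
\]
Each exponent $i$ in the first sum satisfies $i\le n<m\ell$, so Lemma~\ref{big-factor}\eqref{b-f:ord} gives $\ord_\ell(q^i-1)=e$ when $m\mid i$ and $0$ otherwise; the number of such $i$ is $\lfloor n/m\rfloor$, so the first sum is $e\lfloor n/m\rfloor$. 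Applying the same computation to $|G|=q^{\binom{n}{2}}\prod_{i=1}^n(q^i-1)$ shows $\ord_\ell|G|=e\lfloor n/m\rfloor$ as well, so once the first displayed equality of the lemma is established, the second is immediate.

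Everything thus comes down to the denominator sum, and I would show that the only $f\in\supp\nu$ contributing to it are those with $\deg(f)\in m\Z$, each contributing exactly $e$. The first step is to verify that Lemma~\ref{big-factor}\eqref{b-f:ord} applies to every factor that occurs, i.e.\ that $h_{\nu(f),i}\deg(f)<m\ell$ for each hook length. If $|\nu(f)|=1$ the exponent is just $\deg(f)\le n<m\ell$; if $|\nu(f)|\ge2$ then, using $h_{\nu(f),i}\le|\nu(f)|\le 2(|\nu(f)|-1)$ together with the hypothesis that the deficiency is $<m/2$, one gets $h_{\nu(f),i}\deg(f)\le2\,\deg(f)(|\nu(f)|-1)<2\cdot\tfrac m2=m\le m\ell$. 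Hence each factor $q^{h_{\nu(f),i}\deg(f)}-1$ contributes $e$ to $\ord_\ell$ when $m\mid h_{\nu(f),i}\deg(f)$ and $0$ otherwise.

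It then remains to determine, for each $f\in\supp\nu$, which hook lengths $h$ satisfy $m\mid h\deg(f)$. If $\deg(f)\in m\Z$, then $\deg(f)\ge m$ forces $|\nu(f)|=1$ (otherwise the deficiency would be at least $\deg(f)\ge m>\tfrac m2$), the unique hook length is $1$, and $1\cdot\deg(f)\in m\Z$, so $f$ contributes exactly $e$. If $\deg(f)\notin m\Z$, I claim no hook length $h$ satisfies $m\mid h\deg(f)$: when $\deg(f)\ge m$ the same deficiency bound gives $|\nu(f)|=1$, $h=1$, and $h\deg(f)=\deg(f)\notin m\Z$; when $\deg(f)<m$, the estimate $h\deg(f)\le|\nu(f)|\deg(f)=\deg(f)(|\nu(f)|-1)+\deg(f)<\tfrac m2+m$ shows that $m\mid h\deg(f)$ would force $h\deg(f)=m$, hence $\deg(f)\mid m$ with $\deg(f)\le m/2$ and $h=m/\deg(f)\ge2$, which (since $h\le|\nu(f)|$) makes the deficiency at least $\deg(f)(h-1)=m-\deg(f)\ge\tfrac m2$, contradicting the hypothesis. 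Summing the contributions, the denominator sum equals $e\,|\{f\in\supp\nu\mid\deg(f)\in m\Z\}|$, which gives the first displayed equality and hence the lemma. The only genuine work is this last case analysis; its content is that the bound "deficiency $<m/2$" is precisely what is needed to rule out all accidental divisibilities $m\mid h\deg(f)$ when $m\nmid\deg(f)$, while at the same time pinning $|\nu(f)|$ to $1$ whenever $m\mid\deg(f)$.
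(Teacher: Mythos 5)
Your proof is correct and follows essentially the same route as the paper: take $\ord_\ell$ of Green's formula \eqref{dchi}, evaluate each $\ord_\ell(q^k-1)$ via part \eqref{b-f:ord} of Lemma~\ref{big-factor}, and use the deficiency bound $<m/2$ to force $\nu(f)=(1)$ for the $f$ that can contribute, so the denominator contributes exactly $e$ for each $f\in\supp\nu$ with $m\mid\deg(f)$. Your case analysis is somewhat more detailed than the paper's (which simply splits on $\deg(f)|\nu(f)|<m$ versus $\ge m$), but the content is the same.
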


\begin{proof}
If $f$ is in the support of $\nu$ and $\deg(f)|\nu(f)| < m$, then $f$ does not contribute any factor of $\ell$ to the denominator of \eqref{dchi}.  So we need only consider the case $\deg(f)|\nu(f)| \ge m$,
in which case $\deg(f) (|\nu(f)|-1) \ge m/2$ if $|\nu(f)|\ge 2$.  Since the deficiency of $\chi$ is less than $m/2$, this is impossible, which means that all $f$ contributing factors of $\ell$ in \eqref{dchi} satisfy $\nu(f) = (1)$.
Moreover, by Lemma~\ref{big-factor}, $\ell$ divides $q^k-1$ if and only if $m$ divides $k$, in which case $\ord_\ell (q^k-1) = e$.  Thus, the factors in \eqref{dchi} contributing to $\ord_\ell$ are
$q^m-1,q^{2m}-1,\ldots,q^{\lfloor n/m\rfloor m}-1$, each of which contributes $e$, and $q^{\deg(f)}-1$ for each $f\in \supp\nu$ of degree divisible by $m$, again each contributing $e$.
\end{proof}

\begin{lemma}
\label{degree-m}
For any positive integer $m$, the number of $\nu\colon \cF\to \cP$ of degree $n$ for which there exist $f\in \cF$ of degree $m$ with $\nu(f)=(1)$ is less than $q^n/m$.
\end{lemma}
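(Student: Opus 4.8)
The plan is to dispose of the trivial range and then run a union bound over the polynomials that could witness the stated property. If $n<m$, then any $\nu$ with $\nu(f)=(1)$ for some $f$ of degree $m$ would have degree at least $\deg(f)|\nu(f)|=m>n$, so no such $\nu$ exists and the count is $0<q^n/m$. Assume henceforth $n\ge m$.

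For a fixed $f\in\cF$ of degree $m$, I would count the degree $n$ functions $\nu$ with $\nu(f)=(1)$ by the same device used in the proof of Lemma~\ref{deficiency}: deleting the value at $f$ puts such $\nu$ into bijection with the functions $\nu'\colon\cF\to\cP$ having $\nu'(f)=\emptyset$ and $\deg(\nu')=n-\deg(f)|\nu(f)|=n-m$, and the number of the latter is at most the total number of degree $n-m$ parametrizations, which by Jordan decomposition equals $|\Cl(\GL(n-m,q))|\le q^{n-m}$ by \eqref{FG}. Next, I would observe that the number of $f\in\cF$ of degree $m$ is strictly less than $q^m/m$: for $m=1$ it equals $q-1$ (the linear polynomials $x-a$ with $a\neq0$, since $x\notin\cF$), and for $m\ge2$ it is the number of monic irreducibles of degree $m$ over $\F_q$, which is at most $(q^m-q)/m$ because the $m$ distinct roots of such a polynomial generate $\F_{q^m}$ over $\F_q$ and hence lie outside $\F_q$.

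Summing the first estimate over the fewer than $q^m/m$ admissible polynomials $f$ then yields a total of less than $(q^m/m)\cdot q^{n-m}=q^n/m$ functions $\nu$ of the forbidden kind, which is exactly the claimed bound. There is no genuine obstacle here; the two points that need a little care are that the required inequality is \emph{strict}, so one must use the sharp count $<q^m/m$ of polynomials (and treat $m=1$ separately, $x$ being excluded from $\cF$), and that the bijection $\nu\mapsto\nu'$ must correctly record the degree drop $\deg(f)|\nu(f)|=m$ so that \eqref{FG} may be applied to $\GL(n-m,q)$.
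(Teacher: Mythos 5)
Your proof is correct and follows essentially the same route as the paper's: bound the number of degree-$m$ elements of $\cF$ by $q^m/m$ via their roots in $\F_{q^m}$, use the deletion bijection $\nu\mapsto\nu'$ onto degree $n-m$ parametrizations bounded by $q^{n-m}$ via \eqref{FG}, and multiply. Your extra care about the $n<m$ case and the strictness of the polynomial count only makes explicit what the paper leaves implicit.
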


\begin{proof}
Any degree $m$ element of $\cF$ splits completely in $\F_{q^m}$, so there are less than $q^m/m$ such elements.  For each $f$, there is a bijective correspondence between $\nu$ of degree $n$ with  $\nu(f) = (1)$ and $\nu'$ of degree $n-m$ with $\nu'(f) = \emptyset$.  By \eqref{FG}, there are at most $q^{n-m}$ such $\nu'$, so the total number of $\nu$ is less than $q^n/m$.
\end{proof}

\begin{lemma}
\label{order-equality}
For all $\varepsilon > 0$, if $n$ is sufficiently large in terms of $\varepsilon$, $m$ is a sufficiently large positive integer, $\ell$ is a prime divisor of $P_m(q)$, and $\ell m > n$, then the probability is at least 
\[1-\frac{2+2\log n - 2\log m}m-\varepsilon\]
that a random element $\chi$ chosen uniformly from $\Irr(G)$ satisfies
\begin{equation}
\label{l-order}
\ord_\ell d_\chi = \ord_\ell |G|.
\end{equation}
\end{lemma}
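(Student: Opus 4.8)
The plan is to reduce the assertion \eqref{l-order} to a condition on $\supp\nu$ by means of Lemma~\ref{ord-l}, and then to bound the exceptional characters by combining Lemma~\ref{deficiency} (to discard characters of large deficiency) with Lemma~\ref{degree-m} (to control the survivors). Identify $\Irr(G)$ with the set of degree-$n$ functions $\nu\colon\cF\to\cP$, fix $\varepsilon$, $m$ and $\ell$ as in the statement, and put $e:=\ord_\ell P_m(q)>0$; recall that $\ell m>n$. Let $\cX_1\subset\Irr(G)$ be the set of characters of deficiency $\ge m/2$ and $\cX_2$ the set of characters of deficiency $<m/2$ for which some $f\in\supp\nu$ has $\deg(f)\in m\Z$. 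By Lemma~\ref{ord-l}, any $\chi$ outside $\cX_1$ for which no $f\in\supp\nu$ has degree divisible by $m$ satisfies $\ord_\ell d_\chi=\ord_\ell|G|-e\cdot 0=\ord_\ell|G|$. Hence every $\chi$ violating \eqref{l-order} lies in $\cX_1\cup\cX_2$, and it will be enough to show $|\cX_1|<\varepsilon\,|\Irr(G)|$ and $|\cX_2|\le\tfrac{2+2\log n-2\log m}{m}\,|\Irr(G)|$ (in the degenerate range $n<m$ the lemma is anyway immediate, as noted below).

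To bound $\cX_1$, note that the deficiency is a nonnegative integer, so $\chi\in\cX_1$ precisely when $\deg(f)(|\nu(f)|-1)\ge\lceil m/2\rceil$ for some $f$. Lemma~\ref{deficiency} with $N=\lceil m/2\rceil$ then bounds the number of such $\nu$ by $\tfrac{2\lceil m/2\rceil\,\gamma^{\lceil m/2\rceil}}{(1-\gamma)^2}q^n$ (with $\gamma<1$ the constant of Lemma~\ref{rare}), and dividing by $|\Irr(G)|\ge q^n/2$ from \eqref{FG} bounds the proportion of $\cX_1$ by $\tfrac{2(m+1)\gamma^{\lceil m/2\rceil}}{(1-\gamma)^2}$. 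Since $\gamma<1$, this tends to $0$ as $m\to\infty$, so the largeness of $m$ assumed in the hypotheses forces it below $\varepsilon$; this is the only point where largeness is used.

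The reason for cutting the deficiency at $m/2$ is that it also pins down the offending $\nu(f)$. If $\chi\in\cX_2$ with a witness $f$ — so $f\in\supp\nu$, $m\mid\deg(f)$, hence $\deg(f)\ge m$ — then $|\nu(f)|\ge 2$ would make the deficiency at least $\deg(f)\ge m>m/2$, which is impossible; therefore $\nu(f)=(1)$. Thus $\cX_2$ is contained in the set of $\nu$ for which some $f$ of degree $km$ has $\nu(f)=(1)$, for some $k$ with $km\le n$ (otherwise $\deg(\nu)\ge km>n$). Applying Lemma~\ref{degree-m} with $km$ in place of $m$ bounds the number of such $\nu$, for each fixed $k$, by $q^n/(km)$; summing over $1\le k\le\lfloor n/m\rfloor$ and using \eqref{FG} once more,
\[\frac{|\cX_2|}{|\Irr(G)|}\le\frac{2}{m}\sum_{k=1}^{\lfloor n/m\rfloor}\frac{1}{k}\le\frac{2}{m}\bigl(1+\log(n/m)\bigr)=\frac{2+2\log n-2\log m}{m},\]
which is the bound we wanted. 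Adding the two estimates yields the asserted probability bound.

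I expect the one step that is not purely mechanical to be the choice of the deficiency threshold $m/2$: it has to be small enough that Lemma~\ref{deficiency} throws away only an $\varepsilon$-fraction of $\Irr(G)$, yet large enough that a polynomial of degree a multiple of $m$ can carry only a part of size $1$ — which is exactly what makes the sharp count $q^n/(km)$ of Lemma~\ref{degree-m} available and produces the harmonic term $\tfrac{1}{m}\sum_k 1/k$ rather than something weaker. After that, the remaining items — checking that the harmonic estimate lands precisely on $(2+2\log n-2\log m)/m$, and observing that in the range $n<m$ one has $\ell\nmid|G|$, so \eqref{l-order} holds for every $\chi$ and there is nothing to prove — are routine.
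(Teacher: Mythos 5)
Your proposal is correct and follows essentially the same route as the paper: Lemma~\ref{ord-l} reduces \eqref{l-order} to the absence of support elements of degree in $m\Z$ for characters of deficiency $<m/2$, Lemma~\ref{deficiency} with \eqref{FG} disposes of the deficiency-exceptional set (the paper fixes $N=N(\varepsilon)$ and assumes $m>2N$, you apply the lemma directly at threshold $\lceil m/2\rceil$ and let largeness of $m$ do the same job), and Lemma~\ref{degree-m} summed over degrees $km\le n$ gives the harmonic bound $\tfrac{2}{m}\sum_k 1/k<\tfrac{2+2\log n-2\log m}{m}$. The only difference is minor bookkeeping, plus your (harmless, slightly more careful) remark about the range $n<m$.
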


\begin{proof}
Choose $N$ in Lemma~\ref{deficiency} such that $N\gamma^N < (1-\gamma)^2\varepsilon/4$.  By \eqref{FG}, the probability that $\chi$ has deficiency $\ge N$ is less than $\varepsilon$.
We assume $m>2N$, so with probability greater than $1-\varepsilon$, the deficiency of a random $\chi\in \Irr(G)$ is less than $m/2$.  By Lemma~\ref{ord-l}, this implies
\eqref{l-order}
provided that no element in the support of $\nu$ has degree a multiple of $m$.  If $f\in \supp \nu$ has degree $km$, 
then the deficiency condition on $\nu$ implies $\nu(f) = (1)$.
By Lemma~\ref{degree-m}, the probability that there exists an element in the support of $\nu$ of degree $km$ is less than $2/km$,
so the probability that there is an element in the support of $\nu$ with degree in $m\Z$ is less than
\[\sum_{k=1}^{\lfloor n/m\rfloor} \frac 2{km} < \frac{2+2\log n - 2\log m}m.\qedhere\]
\end{proof}

\begin{lemma}
\label{R-condition}
For all $\delta >0$, if $n$ is sufficiently large in terms of $\delta$, $m\ge \sqrt n$, and $\ell$ is any prime divisor of $P_m(q)$, then the probability of \eqref{l-order} is greater than $1-\delta/2$.
\end{lemma}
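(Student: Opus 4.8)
The plan is to deduce Lemma~\ref{R-condition} as a quantitative limiting case of Lemma~\ref{order-equality}. The key observation is that the error term in Lemma~\ref{order-equality} is
\[
\frac{2+2\log n - 2\log m}{m}+\varepsilon,
\]
and when $m \ge \sqrt n$ this first summand is
\[
\frac{2+2\log n - 2\log m}{m} \le \frac{2+\log n}{\sqrt n},
\]
since $2\log n - 2\log m \le 2\log n - \log n = \log n$ and $1/m \le 1/\sqrt n$. This quantity tends to $0$ as $n\to\infty$, so for $n$ large enough in terms of $\delta$ it is less than $\delta/4$.

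First I would fix $\varepsilon := \delta/4$ and invoke Lemma~\ref{order-equality} with this $\varepsilon$: it supplies a threshold $n_0$ and a bound $m_0$ on how large $m$ must be. Then I would choose $n$ large enough that simultaneously (a) $n \ge n_0$, (b) $\sqrt n \ge m_0$, and (c) $(2+\log n)/\sqrt n < \delta/4$; all three are achievable by taking $n$ sufficiently large in terms of $\delta$. Now suppose $m \ge \sqrt n$ and $\ell$ is any prime divisor of $P_m(q)$. Condition (b) guarantees $m \ge m_0$, so $m$ is ``sufficiently large'' in the sense of Lemma~\ref{order-equality}. I also need $\ell m > n$: this holds because $\ell \ge 2$ forces $\ell m \ge 2m \ge 2\sqrt n > n$ once $n \ge 5$, say — and indeed one should note that $P_m(q)$ is even only in degenerate cases, but regardless every prime divisor $\ell$ of $P_m(q)$ is at least $2$, and part~\eqref{b-f:mod} of Lemma~\ref{big-factor} in fact shows $\ell \equiv 1 \pmod m$, hence $\ell \ge m+1$, giving $\ell m > m^2 \ge n$ directly. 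So all hypotheses of Lemma~\ref{order-equality} are met.

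Applying Lemma~\ref{order-equality}, the probability of \eqref{l-order} is at least
\[
1 - \frac{2+2\log n - 2\log m}{m} - \varepsilon
> 1 - \frac{\delta}{4} - \frac{\delta}{4}
= 1 - \frac{\delta}{2},
\]
as desired. The only genuine content here is the monotonicity estimate $2\log n - 2\log m \le \log n$ for $m \ge \sqrt n$ together with $1/m \le 1/\sqrt n$, which together bound the Lemma~\ref{order-equality} error by something going to $0$; everything else is bookkeeping to ensure the various ``sufficiently large'' conditions are compatible. I do not expect a serious obstacle: the main point to be careful about is that $P_m(q)$ could in principle be $1$ (so that it has no prime divisor $\ell$ at all), in which case the statement is vacuous, and that the bound $\ell \ge m+1$ from Lemma~\ref{big-factor}\eqref{b-f:mod} is what cleanly secures the hypothesis $\ell m > n$ without any side conditions on $q$.
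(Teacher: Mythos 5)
Your proposal is correct and follows essentially the same route as the paper: apply Lemma~\ref{order-equality} with $\varepsilon=\delta/4$, secure $\ell m>n$ via part~\eqref{b-f:mod} of Lemma~\ref{big-factor} (so $\ell>m$ and $\ell m>m^2\ge n$), and note that for $m\ge\sqrt n$ the error term is $O(n^{-1/2}\log n)\to 0$. One small slip: the interim claim $2m\ge 2\sqrt n>n$ is false for $n\ge 5$, but it is harmless since you immediately replace it by the correct bound $\ell\ge m+1$.
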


\begin{proof}
By part \eqref{b-f:mod} of Lemma~\ref{big-factor}, $\ell>m$, so $\ell m > n$.  Applying Lemma~\ref{order-equality} for $\varepsilon = \delta/4$, the claim holds if 
\[\frac{2+2\log n - 2\log m}m < \frac \delta 4.\]
For $n\ge 8$ and $m\ge \sqrt n$, the left-hand side is less than $2n^{-1/2}\log n$, which goes to zero as $n$ goes to $\infty$.
\end{proof}
\section{Proof of Lemma~\ref{Lemma:B}}\label{Sect:Proof:Lemma:B}

Let $\Fact f$ denote the total number of factors in the decomposition of ${f(x)\in \F_q[x]}$ into irreducibles.
For each $g\in \GL(n,q)$, let $p_g(x)$ denote the characteristic polynomial of $g$.

\begin{lemma}
\label{nearly-squarefree}
There exist constants $A$ and $B$ such that for all $m$, $n$, and $q$, at most $A n^B q^{-m}|\GL(n,q)|$ elements of $\GL(n,q)$ have a characteristic polynomial with a repeated
irreducible factor of degree $\ge m$.
\end{lemma}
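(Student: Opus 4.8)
The plan is to parametrize $g\in G:=\GL(n,q)$ by its map $\nu\colon\cF\to\cP$ of degree $n$ and use that $p_g=\prod_{f\in\cF}f^{|\nu(f)|}$, so $p_g$ has a repeated irreducible factor of degree $\ge m$ exactly when $f^2\mid p_g$ (equivalently $|\nu(f)|\ge 2$) for some irreducible $f$ of degree $\ge m$. A union bound over such $f$ then reduces the lemma to a single‑polynomial estimate: for every irreducible $f$ of degree $d$,
\[
\#\{g\in G:f^2\mid p_g\}<c\,q^{-2d}|G|
\]
for an absolute constant $c$. Granting this, I would sum over the at most $q^d/d$ irreducibles of each degree $d$ with $m\le d\le n/2$ (larger degrees contribute nothing) and over $d$, obtaining a bad set of size at most $c|G|\sum_{d\ge m}q^{-d}/d\le (c/m)(1-q^{-1})^{-1}q^{-m}|G|\le 2c\,q^{-m}|G|$; thus in fact $A=2c$ and $B=0$ work, the factor $n^B$ permitted by the statement being unnecessary.

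To prove the single‑polynomial estimate I would use the $f$‑primary decomposition. If $f^2\mid p_g$, write $V=\F_q^n=V_1\oplus V_2$ with $V_1=\ker f(g)^n$ the $f$‑primary part and $V_2$ the sum of the remaining primary components; this is a $g$‑invariant decomposition with $\dim V_1=jd$ for some $j\ge 2$, with $g|_{V_1}$ of characteristic polynomial $f^j$ and $g|_{V_2}$ of characteristic polynomial prime to $f$. Conversely, an ordered complementary pair $(V_1,V_2)$ with $\dim V_1=jd$, together with $g_1\in\GL(V_1)$ of characteristic polynomial $f^j$ and $g_2\in\GL(V_2)$ of characteristic polynomial prime to $f$, reconstitutes such a $g=g_1\oplus g_2$, and the two operations are mutually inverse. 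Since the number of ordered complementary pairs with $\dim V_1=jd$ equals $|G|/(|\GL(jd,q)|\,|\GL(n-jd,q)|)$ and there are at most $|\GL(n-jd,q)|$ admissible $g_2$, this gives
\[
\#\{g\in G:f^2\mid p_g\}\le\sum_{j\ge 2}\frac{|G|}{|\GL(jd,q)|}\,A_j,\qquad A_j:=\#\{h\in\GL(jd,q):p_h=f^j\}.
\]

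It remains to bound $A_j$. By the rational Jordan decomposition $h=su=us$ with $s$ semisimple and $u$ unipotent; since $p_h=f^j$ the minimal polynomial of $s$ is $f$, so $s$ lies in the unique semisimple conjugacy class with characteristic polynomial $f^j$, whose centralizer in $\GL(jd,q)$ is isomorphic to $\GL(j,q^d)$, and $u$ ranges over the unipotent elements of that centralizer. Using that $\GL(j,Q)$ has exactly $Q^{j(j-1)}$ unipotent elements (Steinberg), together with $|\GL(j,q^d)|=q^{dj^2}\prod_{i=1}^j(1-q^{-di})>\tfrac14 q^{dj^2}$, I obtain
\[
A_j=\frac{|\GL(jd,q)|}{|\GL(j,q^d)|}\,q^{dj(j-1)}<4\,q^{-dj}|\GL(jd,q)|.
\]
Substituting and summing the geometric series (with $q^{-d}\le\tfrac12$) yields $\#\{g\in G:f^2\mid p_g\}<4|G|\sum_{j\ge2}q^{-dj}\le 8\,q^{-2d}|G|$, i.e.\ $c=8$, hence $A=16$ and $B=0$.

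I expect the main obstacle to be purely organizational: checking that the $f$‑primary decomposition really is bijective — in particular that prescribing $g_1$ and $g_2$ on a fixed ordered complementary pair reconstitutes a $g$ whose exact $f$‑primary part is $V_1$ (this uses that $f(g_1)$ is nilpotent while $f(g_2)$ is invertible) — and keeping the constants uniform in $q$, which costs nothing beyond $q\ge 2$. If one prefers to avoid Steinberg's count, $A_j$ can instead be bounded by noting there are at most $p(j)$ conjugacy classes with characteristic polynomial $f^j$, each of centralizer order at least $q^{dj}-q^{d(j-1)}\ge\tfrac12 q^{dj}$ (attained at the companion matrix), which gives $A_j\le 2p(j)q^{-dj}|\GL(jd,q)|$ and, via $p(j)\le\left(\tfrac{1+\sqrt5}{2}\right)^{j}$ (Lemma~\ref{phi}), the same conclusion with a larger constant.
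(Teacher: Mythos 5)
Your argument is correct, but it takes a genuinely different route from the paper. The paper does not count elements directly: it cites \cite[Proposition~3.3]{LS} to bound by $(A/8)n^Bq^{n^2-n}$ the number of elements with any \emph{given} characteristic polynomial, counts the characteristic polynomials having a repeated irreducible factor of degree $\ge m$ (fewer than $2q^{n-m}$), and combines this with $|\GL(n,q)|>q^{n^2}/4$; the $n^B$ in the statement is exactly the price of the cited proposition. You instead fix an irreducible $f$ of degree $d$, split off the $f$-primary part, count ordered complementary pairs as $|G|/(|\GL(jd,q)||\GL(n-jd,q)|)$, and count elements of $\GL(jd,q)$ with characteristic polynomial $f^j$ via Jordan decomposition and Steinberg's theorem (the semisimple class has centralizer $\GL(j,q^d)$ and that group has $q^{dj(j-1)}$ unipotent elements), then sum a union bound over $f$ and $d\ge m$. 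Your bookkeeping checks out: the pair count, the bijectivity of the primary decomposition, the bound $A_j<4q^{-dj}|\GL(jd,q)|$, and the geometric series all work for $q\ge 2$, so you get the stronger, self-contained conclusion with $B=0$ and an explicit $A=16$, avoiding the external input from \cite{LS} altogether (which the paper needs again later anyway, for Proposition~3.4). What the paper's route buys is brevity and uniformity with that later citation; what yours buys is a sharper bound and independence from \cite{LS}. One small caveat: in your fallback avoiding Steinberg, the assertion that every class with characteristic polynomial $f^j$ has centralizer of order at least $q^{dj}-q^{d(j-1)}$ relies on the standard fact that the regular (companion-matrix) class minimizes centralizer order among such classes; either cite that or replace it by the uniform bound $\tfrac14 q^{dj}$ coming from the explicit centralizer-order formula, which still yields the same conclusion with a slightly larger constant.
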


\begin{proof}
By \cite[Proposition~3.3]{LS}, the number of elements of $\GL(n,q)$ with any given characteristic polynomial is at most $(A/8) n^B q^{n^2-n}$ for some absolute constants $A$ and $B$.  (Actually, the statement is proven only for ``classical'' groups, but the proof for $\GL(n,q)$ is identical.)  For any given $f$ of degree $m$, there are $q^{n-2m}$ polynomials of degree $\le n$ divisible by $f^2$, so there are less than $q^{n-m}$ polynomials of degree $n$ with a repeated irreducible factor of degree $m$ and less than $q^{n-m}+q^{n-m-1} + \cdots < 2q^{n-m}$ polynomials with a repeated irreducible factor of degree $\ge m$. On the other hand, by the same argument as \eqref{quarter},
\[|\GL(n,q)| = \prod_{i=1}^n (q^n-q^i) > \frac {q^{n^2}}4.\]
The lemma follows.
\end{proof}

\begin{proof}[Proof of Lemma~\ref{Lemma:B}]

By \cite[Proposition~3.4]{LS}, for all $\delta > 0$ there exists $k$ such that 
\begin{equation}
\label{Shalev}
\Prob[\Fact p_g > k\log n] < \frac \delta4,
\end{equation}
where $\Prob$ denotes  probability with respect to the uniform distribution on $G=\GL(n,q)$.
(Actually, the cited reference proves the analogous claim for $\SL(n,q)$, but the proof goes through the $\GL(n,q)$ case.)
Choose $k$ so that this holds and assume that $n$ is large enough that 
\begin{enumerate}[\rm(a)]
\item\label{c:sqn} $\sqrt n > k\log n$,
\item\label{c:AnB} $A n^B 2^{-\sqrt n} < \frac \delta 4$, where $A$ and $B$ are defined as in Lemma~\ref{nearly-squarefree},
\item\label{c:sqm} $\sqrt{m/2} > \log_2 m + 2$ for all $m\ge \sqrt n$,
\item\label{c:m} $m > 1/\varepsilon$ for all $m\ge \sqrt n$.
\end{enumerate}

Let $\cX$ denote the set of elements $g$ for which $p_g(x)$ has $\le k\log n$ 
irreducible factors and no repeated factor of degree $\ge \sqrt n$.  By condition \eqref{c:sqn} on $n$, 
every $p_g$ with $g\in\cX$ has a simple irreducible factor of degree $\ge \sqrt n$.
By equation \eqref{Shalev} and condition \eqref{c:AnB}, $|G\setminus \cX| < (\delta/2)|G|$.
For each $g\in \cX$, fix an irreducible factor of degree $m_g \ge\sqrt n$ of $p_g$. 
By condition \eqref{c:sqm} and part \eqref{b-f:bound} of Lemma~\ref{big-factor}, $P_{m_g}(q) > 1$, so for each $g$, we may fix a prime divisor $\ell_g$ of $P_{m_g}(q)$.  We define $\cR$ to consist of all pairs $(\chi,g)$ where $g\not\in \cX$
or where $g\in \cX$ but 
\[\ord_{\ell_g} d_\chi \neq \ord_{\ell_g} |G|.\]
By Lemma~\ref{R-condition}, for each $g\in \cX$, there are at most $(\delta/2) |\Irr(G)|$ pairs ${(\chi,g)\in \cR}$.  Thus, $\cR$ satisfies equation \eqref{Lemma:B:second}.

For pairs $(\chi,g)\not\in \cR$, we have $g\in \cX$ and $\ord_{\ell_g} d_\chi = \ord_{\ell_g} |G|$.
As $p_g(x)$ has an irreducible factor of degree $m_g$ which occurs with multiplicity $1$, 
the centralizer of $g$ has order divisible by $q^{m_g}-1$ and therefore by $\ell_g$.  Therefore, $\ord_{\ell_g} s_g < \ord_{\ell_g} |G|$.
This implies that $\ell_g$ is a divisor of the denominator of $(d_\chi,s_g)/d_\chi$.  
As $\ell_g\equiv 1\pmod{m_g}$, we have $\ell_g > m_g$.  By condition \eqref{c:m} on $n$,  $m_g\ge 1/\varepsilon$.
Thus, equation \eqref{Lemma:B:first} holds.
\end{proof}

\end{document}